\date{\today}
\newcommand{\Z}{{\mathbb Z}}
\newcommand{\R}{{\mathbb R}}
\newtheorem{theorem}{Theorem}
\newtheorem{lemma}{Lemma}
\title[The Off-Diagonal Fibonacci Operator]{The Spectrum and the Spectral Type of the
Off-Diagonal Fibonacci Operator}
\author{David Damanik}
\address{Department of Mathematics, Rice University, Houston, TX~77005, USA}
\email{damanik@rice.edu}
\thanks{D.\ D.\ was supported in part by NSF grant
DMS--0653720.}
\author{Anton Gorodetski}
\address{Department of Mathematics, University of California, Irvine, CA~92697, USA}
\email{asgor@math.uci.edu}
\begin{document}

\begin{abstract}
We consider Jacobi matrices with zero diagonal and off-diagonals
given by elements of the hull of the Fibonacci sequence and show
that the spectrum has zero Lebesgue measure and all spectral
measures are purely singular continuous. In addition, if the two
hopping parameters are distinct but sufficiently close to each
other, we show that the spectrum is a dynamically defined Cantor
set, which has a variety of consequences for its local and global
fractal dimension.
\end{abstract}

\maketitle

\section{Introduction}

The study of the Fibonacci operator was initiated in the early
1980's by Kohmoto et al.\ \cite{kkt} and Ostlund et al.\
\cite{oprss}. At that point in time, the interest in this model
was based mainly on the existence of an exact renormalization
group procedure and the appearance of critical eigenstates and
zero measure Cantor spectrum. Only shortly thereafter, Shechtman
et al.\ \cite{sbgc} reported their discovery of structures, now
called quasicrystals, whose diffraction exhibits sharp Bragg peaks
with rotational symmetries that ensure the aperiodicity of the
structure in question.

The Fibonacci sequence is the central model of a quasicrystal in
one dimension. Indeed, it is aperiodic and pure point diffractive
and, moreover, it belongs to virtually all classes of mathematical
models of quasicrystals that have since been proposed. We refer to
the reader to \cite{BM} for a relatively recent account of the
mathematics related to the modelling and study of quasicrystals.

Thus, the study of the Fibonacci operator was further motivated by
the interest in electronic spectra and transport properties of
one-dimensional quasicrystals. Consequently, apart from the almost
Mathieu operator, the Fibonacci operator has been the most heavily
studied quasi-periodic operator in the last three decades; compare
the survey articles \cite{D00,D07,S95}.

Partly due to the choice of the model in the foundational papers
\cite{kkt,oprss}, the mathematical literature on the Fibonacci
operator has so far only considered the diagonal model, that is, a
discrete one-dimensional Schr\"odinger operator with potential
given by the Fibonacci sequence. Given the connection to
quasicrystals and hence aperiodic point sets, and particularly
cut-and-project sets, it is however equally (if not more) natural
to study the off-diagonal model. That is, one considers a Jacobi
matrix with zero diagonal and off-diagonals given by the Fibonacci
sequence. This would correspond to hopping over unequal distances,
the sites being points on the real line generated by the
cut-and-project scheme that produces the Fibonacci point set.
While we will not explain these concepts here in detail, we do
refer the reader to \cite{BM} for background and we will describe
the resulting model carefully in the next section. For further
motivation to study the off-diagonal model, we mention that it has
been the object of interest in a number of physics papers; see,
for example, \cite{EL06,EL08,kst,RP,VP,YYXZZ}.

In fact, our interest in the off-diagonal model was triggered by the
numerical results for separable higher-dimensional models obtained
recently by Even-Dar Mandel and Lifshitz in \cite{EL06,EL08}. We
intend to explore the observations reported in these papers and
prove some of the phenomena rigorously, and for this task, we will
need the basic spectral results for the off-diagonal Fibonacci
operator obtained in this paper.

The paper is organized as follows. In Section~\ref{s.s1} we
describe in detail the model we study and the results we obtain
for it. The results concerning the spectrum as a set are proved in
Section~\ref{s.s2} and the results concerning the spectral type
are proved in Section~\ref{s.s3}.

\medskip

\noindent\textit{Acknowledgment.} We wish to thank Serge Cantat
for useful conversations.

\section{Model and Results}\label{s.s1}

In this section we describe the off-diagonal Fibonacci operator and the results for it that are
proved in subsequent sections. Let $a,b$ be two positive real numbers and consider the Fibonacci
substitution,
$$
S(a) = ab , \quad S(b) = a.
$$
This substitution rule extends to finite and one-sided infinite words by concatenation. For example,
$S(aba) = abaab$. Since $S(a)$ begins with $a$, one obtains a one-sided infinite sequence that is
invariant under $S$ by iterating the substitution rule on $a$ and taking a limit. Indeed, we have
\begin{equation}\label{f.fibrec}
S^k(a) = S^{k-1}(S(a)) = S^{k-1}(ab) = S^{k-1}(a) S^{k-1}(b) = S^{k-1}(a) S^{k-2}(a).
\end{equation}
In particular, $S^k(a)$ starts with $S^{k-1}(a)$ and hence there is
a unique one-sided infinite sequence $u$, the so-called Fibonacci
substitution sequence, that starts with $S^k(a)$ for every $k$. The
hull $\Omega_{a,b}$ is then obtained by considering all two-sided
infinite sequences that locally look like $u$,
$$
\Omega_{a,b} = \{ \omega \in \{a,b\}^\Z : \text{ every subword of
$\omega$ is a subword of } u \}.
$$
It is known that, conversely, every subword of $u$ is a subword of
every $\omega \in \Omega_{a,b}$. In this sense, $u$ and all elements
of the hull $\omega$ look exactly the same locally.

We wish to single out a special element of $\Omega_{a,b}$. Notice
that $ba$ occurs in $u$ and that $S^2(a) = aba$ begins with $a$
and $S^2(b) = ab$ ends with $b$. Thus, iterating $S^2$ on $b | a$,
where $|$ denotes the eventual origin, we obtain as a limit a
two-sided infinite sequence which belongs to $\Omega_{a,b}$ and
coincides with $u$ to the right of the origin. This element of
$\Omega_{a,b}$ will be denoted by $\omega_s$.

Each $\omega \in \Omega_{a,b}$ generates a Jacobi matrix $H_\omega$
acting in $\ell^2(\Z)$,
$$
(H_\omega \psi)_n = \omega_{n+1} \psi_{n+1} + \omega_{n}
\psi_{n-1}.
$$
With respect to the standard orthonormal basis $\{ \delta_n \}_{n
\in \Z}$ of $\ell^2(\Z)$, where $\delta_n$ is one at $n$ and
vanishes otherwise, this operator has the matrix
$$
\begin{pmatrix}
\ddots & \ddots & \ddots & {} & {} & {} \\
\ddots & 0 & \omega_{-1} & 0 & {} & {} \\
\ddots & \omega_{-1} & 0 & \omega_0 & 0 & \vphantom{\ddots} \\
{} & 0 & \omega_0 & 0 & \omega_1 & \ddots  \\
{} & {} & 0 & \omega_1 & 0 & \ddots   \\
{} & {} & {} & \ddots & \ddots & \ddots
\end{pmatrix}
$$
and it is clearly self-adjoint.

This family of operators, $\{H_\omega\}_{\omega \in \Omega_{a,b}}$,
is called the off-diagonal Fibonacci model. Of course, the structure
of the Fibonacci sequence disappears when $a = b$. In this case, the
hull consists of a single element, the constant two-sided infinite
sequence taking the value $a = b$, and the spectrum and the spectral
measures of the associated operator $H_\omega$ are well understood.
For this reason, we will below always assume that $a \not= b$.
Nevertheless, the limiting case, where we fix $a$, say, and let $b$
tend to $a$ is of definite interest.

Our first result concerns the spectrum of $H_\omega$. For $S \subset
\R$, we denote by $\text{dim}_H S$ the Hausdorff dimension of $S$
and by $\text{dim}_B S$ the box counting dimension of $S$ (which is
then implicitly claimed to exist); see \cite{DEGT} or \cite{PT} for
the definition of these fractal dimensions.

\begin{theorem}\label{t.main1}
Suppose $a,b > 0$ and $a \not= b$. Then, there exists a compact
set $\Sigma_{a,b} \subset \R$ such that $\sigma(H_\omega) =
\Sigma_{a,b}$ for every $\omega \in \Omega_{a,b}$, and
\begin{itemize}

\item[{\rm (i)}] $\Sigma_{a,b}$ has zero Lebesgue measure.

\item[{\rm (ii)}] The Hausdorff dimension $\dim_H \Sigma_{a,b}$ is
an analytic function of $a$ and $b$.

\item[{\rm (iii)}]  $0<\dim_H \Sigma_{a,b}<1$.
\end{itemize}

Also,  there exists $\varepsilon_0>0$ such that if $\frac{a^2 +
b^2}{2ab} < 1 + \varepsilon_0$ {\rm (}in other words, if $a$ and
$b$ are close enough{\rm )}, then

\begin{itemize}

\item[{\rm (iv)}] The spectrum $\Sigma_{a,b}$ is a Cantor set that depends
continuously on $a$ and $b$ in the Hausdorff metric.

\item[{\rm (v)}] For every small $\delta>0$ and every $E \in
\Sigma_{a,b}$, we have
\begin{align*}
\dim_H \left( (E - \delta, E + \delta) \cap \Sigma_{a,b} \right) & =
\dim_B \left( (E - \delta, E + \delta) \cap \Sigma_{a,b} \right) \\
& = \dim_H \Sigma_{a,b}  =\dim_B \Sigma_{a,b}.
\end{align*}

\item[{\rm (vi)}] Denote $\alpha = \dim_H \Sigma_{a,b}$, then the
Hausdorff $\alpha$-measure of $\Sigma_{a,b}$ is positive and
finite.
\end{itemize}
\end{theorem}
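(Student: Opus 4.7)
My plan is to reduce Theorem~\ref{t.main1} to a dynamical analysis of the Fibonacci trace map on a family of invariant surfaces, following the template developed for the diagonal Fibonacci Hamiltonian (S\"ut\H{o}; Cantat; Damanik-Gorodetski), suitably adapted to the off-diagonal setting.

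I would first set up one-step transfer matrices $A_a(E), A_b(E)$ for the eigenvalue equation $E\psi_n = \omega_{n+1}\psi_{n+1} + \omega_n\psi_{n-1}$, and let $M_k(E)$ denote the transfer matrix across $S^k(a)$ starting from the origin of $\omega_s$. The substitution recursion $S^k(a) = S^{k-1}(a) S^{k-2}(a)$ yields $M_k = M_{k-2}M_{k-1}$, so the half-traces $x_k = \tfrac12\mathrm{tr}\,M_k$ satisfy the Fibonacci trace-map recursion $x_{k+1} = 2 x_k x_{k-1} - x_{k-2}$. A direct computation shows that the Fricke-Vogt quantity
\[
I(x_{k-1}, x_k, x_{k+1}) = x_{k-1}^2 + x_k^2 + x_{k+1}^2 - 2 x_{k-1}x_k x_{k+1} - 1
\]
is invariant under the recursion; in the off-diagonal model its value $I(a,b)$ is \emph{independent of $E$}, strictly positive whenever $a \neq b$, and tends to $0$ as $b \to a$. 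By minimality of $\Omega_{a,b}$ and the standard S\"ut\H{o} argument linking boundedness of $x_k$ to membership in the spectrum, $\Sigma_{a,b} = \{ E : (x_k(E))_k \text{ is bounded} \}$. Part (i) then follows either from the S\"ut\H{o}--Bellissard--Iochum--Scoppola--Testard expansivity argument, which exploits $I(a,b) > 0$, or from Kotani theory for finite-valued aperiodic Jacobi matrix families.

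For parts (ii)--(vi) I would work with the trace map $T(x,y,z) = (y, z, 2yz - x)$, which preserves each level surface $S_I = \{I(x,y,z) = I\}$. At $I = 0$ (i.e., $a = b$) this is the Cayley cubic, on which $T$ is well understood and the spectrum degenerates to an interval. For $I > 0$, Cantat's work on polynomial automorphisms of affine surfaces guarantees that the non-wandering set of $T|_{S_I}$ is a locally maximal hyperbolic horseshoe $\Lambda_I$ depending analytically on $I$. The analytic curve $\gamma_{a,b}(E) = (x_0(E), x_1(E), x_2(E))$ lies in $S_{I(a,b)}$ and satisfies $\Sigma_{a,b} = \gamma_{a,b}^{-1}(W^s(\Lambda_{I(a,b)}))$. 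For $a,b$ close, a transversality argument---verifying non-tangency by perturbing from the degenerate case $a=b$---shows that $\gamma_{a,b}$ meets $W^s(\Lambda_{I(a,b)})$ transversally, which makes $\Sigma_{a,b}$ a dynamically defined Cantor set. Standard dimension theory of conformal hyperbolic sets then yields: (ii) analyticity of $\dim_H$ in $(a,b)$ from Ruelle's thermodynamic formalism; (iv) Cantor structure together with Hausdorff continuity from structural stability; (v) coincidence of local and global Hausdorff and box dimensions from conformality and bounded distortion; and (vi) positive finite Hausdorff $\alpha$-measure from a Gibbs equilibrium state. Part (iii) combines positivity of $\dim_H$ (from nontriviality of the horseshoe) with the strict bound $\dim_H < 1$, which follows from Bowen's pressure equation together with the conformal-repeller structure and the fact that $\Sigma_{a,b}$ is a proper Cantor subset of $\R$.

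The main obstacle is the transversality of $\gamma_{a,b}$ with $W^s(\Lambda_{I(a,b)})$. In the diagonal Fibonacci model the invariant varies with the energy, so the corresponding curve naturally traverses different level surfaces; here $I$ depends only on $(a,b)$ and the entire curve $\gamma_{a,b}$ lies on a single surface, so a dedicated check is required to rule out tangencies that would spoil the Cantor structure or the dimension equalities. A secondary issue is extending parts (ii) and (iii) beyond the perturbative regime to all $a \neq b$, which relies on Cantat's hyperbolicity holding for every $I > 0$. Once these geometric inputs are in place, parts (ii)--(vi) follow from well-developed machinery in the dimension theory of hyperbolic dynamics.
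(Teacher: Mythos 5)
Your proposal follows essentially the same route as the paper: the trace-map recursion with the energy-independent, strictly positive Fricke--Vogt invariant, S\"ut\H{o}'s boundedness characterization of the spectrum plus Kotani theory for zero measure, Cantat's hyperbolicity of the bounded-orbit set on the positive level surfaces for (ii)--(iii), and a perturbative transversality of the curve of initial conditions to the stable lamination (degenerating to the Cayley cubic as $b \to a$) to obtain the dynamically defined Cantor set structure and properties (iv)--(vi) in the regime where $a$ and $b$ are close. One small correction: your stated contrast with the diagonal model is inaccurate --- there too the invariant equals $\lambda^2/4$ and is energy-independent, so the line of initial conditions also lies in a single level surface; the transversality issue is the same in both models (the paper in fact proves it for close $a,b$ ``in the same way as for the diagonal Fibonacci Hamiltonian with a small coupling constant''), and it remains open for general $a \neq b$ exactly as you note.
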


Properties (ii)--(vi) are related to hyperbolicity of the trace
map described in the next section, see \cite{Can, Cas, DG}. It is
reasonable to expect that in fact the properties (iv)--(vi) for
$\Sigma_{a,b}$ hold for all positive $a\ne b$, but one of the
steps in the proof is currently missing as will be explained in
Section~\ref{s.s2}.

Next, we turn to the spectral type of $H_\omega$.

\begin{theorem}\label{t.main2}
Suppose $a,b > 0$ and $a \not= b$. Then, for every $\omega \in
\Omega_{a,b}$, $H_\omega$ has purely singular continuous spectrum.
\end{theorem}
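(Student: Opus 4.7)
The plan is to prove the two components of singular-continuous-ness separately: absence of absolutely continuous spectrum, and absence of point spectrum. For the first, nothing new is needed. By Theorem~\ref{t.main1}(i), $\sigma(H_\omega)=\Sigma_{a,b}$ has zero Lebesgue measure, and an absolutely continuous spectral measure cannot concentrate on a Lebesgue null set; hence $\sigma_{ac}(H_\omega)=\emptyset$ for every $\omega\in\Omega_{a,b}$.

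The substance of the theorem is the absence of eigenvalues. I would use the palindrome method of Hof--Knill--Simon, adapted to the off-diagonal Jacobi setting. The combinatorial step is to establish that for every $\omega\in\Omega_{a,b}$ there exist palindromic factors of lengths $\ell_n\to\infty$ occurring in $\omega$ with centers $c_n$ satisfying $|c_n|=O(\ell_n)$. This rests on two facts about the Fibonacci substitution: the language of $\Omega_{a,b}$ is closed under reversal, which can be proved by induction on the recursive identity $S^k(a)=S^{k-1}(a)\,S^{k-2}(a)$; and the subshift $(\Omega_{a,b},\mathrm{shift})$ is minimal, so every factor, once it appears, reappears with bounded gaps in every element of the hull, which ensures that such palindromes can be placed near the origin. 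The dynamical step converts a long palindrome centered near the origin into a reflection identity on transfer matrices that, at every energy $E\in\R$, forbids simultaneous $\ell^2$-decay of a formal eigensolution at $\pm\infty$. Iterating along $\ell_n\to\infty$ then rules out all eigenvalues.

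The main obstacle is the dynamical step, specifically getting the reflection symmetry right for the off-diagonal model. In the Schr\"odinger setting one reflects the potential at a site; in the off-diagonal setting one reflects bond variables, which correspond to sites shifted by a half. One must therefore correctly match the combinatorial palindrome condition --- either $\omega_{c+k}=\omega_{c-k}$ or $\omega_{c+k+1}=\omega_{c-k}$, depending on whether the reflection axis passes through a vertex or through a bond midpoint --- with the correct conjugation of the one-step transfer matrices, and also keep track of the $a/b$ ratios that now appear in the determinant of those matrices. Once this bookkeeping is done, the Hof--Knill--Simon two-sided growth dichotomy applies without essential change. If the setup becomes awkward, an alternative route is a Gordon-type three-block argument using the near-cube factors known to occur in the Fibonacci word; the palindrome approach appears cleaner because palindromicity of the Fibonacci subshift is a direct structural property that holds uniformly in $\omega\in\Omega_{a,b}$.
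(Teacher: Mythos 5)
Your reduction to absence of eigenvalues is correct and matches the paper: zero Lebesgue measure of $\Sigma_{a,b}$ (Theorem~\ref{t.main1}(i)) rules out absolutely continuous spectrum. The gap is in the main step. The Hof--Knill--Simon palindrome method cannot prove the theorem as stated, because it does not apply to \emph{every} element of the hull. HKS requires $\omega$ to be \emph{strongly} palindromic: palindromes of length $\ell_n$ centered at $c_n$ with $\ell_n/B^{|c_n|}\to\infty$ for every $B>1$, because the reflection argument compares the putative eigenfunction near the far edge of the palindrome with its values near the reflected point, and this comparison degrades by the transfer matrix norms over $[0,2c_n]$. Your condition $|c_n|=O(\ell_n)$ --- which is indeed all that linear recurrence of the Fibonacci subshift supplies for an arbitrary $\omega$ --- is far too weak; even invoking the polynomial (rather than exponential) transfer matrix bounds valid on $\Sigma_{a,b}$ does not close this quantitative gap. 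The set of strongly palindromic elements of $\Omega_{a,b}$ is an uncountable dense set of measure zero, so the palindrome route yields absence of eigenvalues only for a generic subset of the hull, not for all $\omega$. This is precisely the known limitation of the HKS approach and the reason uniform results for Fibonacci-type hulls are proved by Gordon-type arguments instead.

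The paper's proof uses the route you relegate to a fallback, and in its two-block (square), not three-block (cube), form: by Damanik--Lenz, for every $\omega$ in the hull and every $k$, the restriction of $\omega$ to $\{n\ge 1\}$ begins with a square $ww$ where $w$ is a cyclic permutation of $S^k(a)$, so that $\mathrm{Tr}\,M_\omega(F_k,E)=2x_k(E)$; Cayley--Hamilton then gives $U(2F_k+1)-2x_k U(F_k+1)+U(1)=0$, and the uniform bound on $|x_k|$ over $\Sigma_{a,b}$ from Lemma~\ref{l.l2} prevents any nontrivial solution from tending to zero at $+\infty$. Note that a genuine three-block Gordon argument would need cubes beginning at the origin, which the Fibonacci hull does not provide for every $\omega$; the two-block version works precisely because the trace bound on the spectrum substitutes for the missing third block. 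To repair your write-up, replace the palindrome step by this square-plus-trace-bound argument, keeping (as you correctly anticipate) the off-diagonal normalization $U_n=(u_n,\omega_n u_{n-1})^T$, whose entries are comparable to $(u_n,u_{n-1})^T$ since the $\omega_n$ are bounded away from $0$ and $\infty$.
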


Throughout the rest of the paper we will only consider $a,b > 0$
with $a \not= b$. Theorem~\ref{t.main1} is proved in
Section~\ref{s.s2} and Theorem~\ref{t.main2} is proved in
Section~\ref{s.s3}.

\section{The Trace Map and its Relation to the Spectrum}\label{s.s2}

The spectral properties of $H_\omega$ are closely related to the
behavior of the solutions to the difference equation
\begin{equation}\label{f.eve}
\omega_{n+1} u_{n+1} + \omega_{n} u_{n-1} = E u_n.
\end{equation}
Denote
$$
U_n = \begin{pmatrix} u_n \\ \omega_{n} u_{n-1} \end{pmatrix}.
$$
Then $u$ solves \eqref{f.eve} (for every $n \in \Z$) if and only if $U$ solves
\begin{equation}\label{f.tmatrix}
U_{n} = T_\omega(n,E) U_{n-1},
\end{equation}
(for every $n \in \Z$), where
$$
T_\omega(n,E) = \frac{1}{\omega_{n}} \begin{pmatrix} E & -1 \\
\omega_{n}^2 & 0 \end{pmatrix}.
$$
Note that $\det T_\omega(n,E) = 1$. Iterating \eqref{f.tmatrix}, we find
$$
U_{n} = M_\omega(n,E) U_0,
$$
where
$$
M_\omega(n,E) = T_\omega(n,E) \times \cdots T_\omega(1,E).
$$
With the Fibonacci numbers $\{F_k\}$, generated by $F_0 = F_1 = 1$, $F_{k+1} = F_k + F_{k-1}$
for $k \ge 1$, we define
$$
x_k = x_k(E) = \frac12 \mathrm{Tr} M_{\omega_s}(F_k,E).
$$
For example, we have
\begin{align*}
M_{\omega_s}(F_1,E) & = \frac{1}{a} \begin{pmatrix} E & -1 \\ a^2 & 0 \end{pmatrix} \\
M_{\omega_s}(F_2,E) & = \frac{1}{b} \begin{pmatrix} E & -1 \\ b^2 & 0 \end{pmatrix} \frac{1}{a}
\begin{pmatrix} E & -1 \\ a^2 & 0 \end{pmatrix} = \frac{1}{ab} \begin{pmatrix} E^2 - a^2 & -E
\\ E b^2 & - b^2 \end{pmatrix} \\
M_{\omega_s}(F_3,E) & = \frac{1}{a} \begin{pmatrix} E & -1 \\ a^2 & 0 \end{pmatrix} \frac{1}{b}
\begin{pmatrix} E & -1 \\ b^2 & 0 \end{pmatrix} \frac{1}{a} \begin{pmatrix} E & -1 \\ a^2 & 0
\end{pmatrix} = \frac{1}{a^2 b} \begin{pmatrix} E^3 - E a^2 - E b^2 & -E^2 + b^2 \\ E^2 a^2 -
a^4 & - E a^2 \end{pmatrix}
\end{align*}
and hence
\begin{equation}\label{f.tmic}
x_1 = \frac{E}{2a} , \quad x_2 = \frac{E^2 - a^2 - b^2}{2ab},
\quad x_3 = \frac{E^3 - 2 E a^2 - E b^2}{2a^2 b}.
\end{equation}

\begin{lemma}\label{l.l1}
We have
\begin{equation}\label{f.tmrec}
x_{k+1} = 2 x_k x_{k-1} - x_{k-2}
\end{equation}
for $k \ge 2$. Moreover, the quantity
\begin{equation}\label{f.tminv}
I_k = x_{k+1}^2 + x_k^2 + x_{k-1}^2 - 2 x_{k+1} x_k x_{k-1} - 1
\end{equation}
is independent of both $k$ and $E$ and it is given by
$$
I = \frac{(a^2 + b^2)^2}{4a^2b^2} - 1.
$$
\end{lemma}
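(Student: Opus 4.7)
The plan is to proceed in three stages: derive a matrix-level recursion from the Fibonacci substitution, convert it to the scalar recursion \eqref{f.tmrec} via a trace identity, and then verify the invariant \eqref{f.tminv} by direct algebra.

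First I would abbreviate $M_k := M_{\omega_s}(F_k,E)$ and exploit \eqref{f.fibrec}, which says $S^k(a) = S^{k-1}(a)\,S^{k-2}(a)$. Since the transfer matrices in \eqref{f.tmatrix} multiply right-to-left as $n$ increases, and since reading the first $F_{k+1}$ letters of $\omega_s$ amounts to reading $S^{k-1}(a)$ followed by $S^{k-2}(a)$, one gets the matrix recursion
$$
M_{k+1} = M_{k-1}\,M_k \qquad (k \ge 2),
$$
which is already consistent with the explicit calculation $M_3 = M_1 M_2$ displayed in the excerpt.

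Next I would invoke the standard $SL(2,\mathbb{R})$ identity $\mathrm{Tr}(AB) + \mathrm{Tr}(AB^{-1}) = \mathrm{Tr}(A)\,\mathrm{Tr}(B)$ applied to $A = M_{k-1}$ and $B = M_k$. The same recursion with $k-1$ in place of $k$ gives $M_k = M_{k-2} M_{k-1}$, so $M_{k-1} M_k^{-1} = M_{k-2}^{-1}$, and since these matrices have determinant one, $\mathrm{Tr}(M_{k-2}^{-1}) = \mathrm{Tr}(M_{k-2})$. Substituting yields $\mathrm{Tr}(M_{k+1}) = \mathrm{Tr}(M_{k-1})\,\mathrm{Tr}(M_k) - \mathrm{Tr}(M_{k-2})$; halving gives precisely \eqref{f.tmrec}.

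For the invariant, I would compute $I_{k+1} - I_k$ by grouping terms to obtain the factorization
$$
I_{k+1} - I_k = (x_{k+2} - x_{k-1})\bigl(x_{k+2} + x_{k-1} - 2 x_{k+1} x_k\bigr),
$$
and observe that the second factor vanishes identically by \eqref{f.tmrec}. Hence $I_k$ is independent of $k$ for $k \ge 2$. Its value, together with the $E$-independence, then follows by direct substitution of \eqref{f.tmic} into $I_2 = x_3^2 + x_2^2 + x_1^2 - 2 x_3 x_2 x_1 - 1$; the constancy of $I_2$ in $E$ forces the $E$-dependent terms to telescope, and the surviving constant works out to $\tfrac{(a^2+b^2)^2}{4a^2b^2} - 1$.

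There is no real obstacle here; the proof is a packaging of well-known ingredients. The only points that require care are bookkeeping issues: getting the order of the matrix product right (the opposite order $M_k M_{k-1}$ would spoil the trace-identity manipulation), and noting that \eqref{f.tmrec} first holds at $k=2$, which is precisely the point where \eqref{f.tmic} anchors the computation of the invariant.
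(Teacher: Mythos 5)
Your argument is correct in substance and is essentially the standard derivation that the paper outsources to the literature: the paper merely records that the prefixes satisfy $s_{k+1}=s_k s_{k-1}$ and says the recursion ``follows as in the diagonal case,'' so your matrix recursion $M_{k+1}=M_{k-1}M_k$, the trace identity $\mathrm{Tr}(AB)+\mathrm{Tr}(AB^{-1})=\mathrm{Tr}(A)\,\mathrm{Tr}(B)$, and the factorization of $I_{k+1}-I_k$ are exactly the intended (and correct) route for \eqref{f.tmrec} and for the $k$-independence of \eqref{f.tminv}. Where you genuinely diverge is in evaluating the invariant: you substitute \eqref{f.tmic} directly into $I_2=I(x_3,x_2,x_1)$, whereas the paper applies $T^{-1}$ twice to $(x_3,x_2,x_1)$ to get $(x_1,x_0,x_{-1})=\left(\frac{E}{2a},\frac{E}{2b},\frac{a^2+b^2}{2ab}\right)$ and evaluates $I$ there. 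Both yield the same number (as they must, since $I$ is preserved by $T$), but the paper's route makes the cancellation of the $E$-dependence a one-line computation rather than an expansion of degree-six polynomials, and --- more importantly --- it produces the initial conditions that are reused later as the line $l_{a,b}$ in the proof of Theorem~\ref{t.main1}.

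One point needs repair: the case $k=2$. Your trace-identity step invokes $M_k=M_{k-2}M_{k-1}$, i.e.\ the matrix recursion at index $k-1$, which the word structure supplies only for $k-1\ge 2$; so your derivation proves \eqref{f.tmrec} for $k\ge 3$, not $k\ge 2$ as you assert at the end. This is not just bookkeeping: with the literal definition $x_0=\tfrac12\mathrm{Tr}\,M_{\omega_s}(F_0,E)=\tfrac{E}{2a}$ one computes $2x_2x_1-x_0=\frac{E^3-Ea^2-Eb^2-Eab}{2a^2b}$, which differs from $x_3=\frac{E^3-2Ea^2-Eb^2}{2a^2b}$ unless $a=b$. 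The identity $x_3=2x_2x_1-x_0$ holds only with $x_0:=\tfrac{E}{2b}$, which is precisely how the paper (re)defines $x_0$ and $x_{-1}$, namely by applying $T^{-2}$ to $(x_3,x_2,x_1)$ so that the recursion at $k=1,2$ holds by construction. So either restrict your claim to $k\ge 3$ --- which is all your invariant computation actually needs, since you anchor at $I_2$ and only use $x_1,x_2,x_3$ --- or adopt the paper's convention for $x_0,x_{-1}$ explicitly.
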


\begin{proof}
Since $\omega_s$ restricted to $\{n \ge 1\}$ coincides with $u$
and the prefixes $s_k$ of $u$ of length $F_k$ obey $s_{k+1} = s_k
s_{k-1}$ for $k \ge 2$ by construction (cf.~\eqref{f.fibrec}), the
recursion \eqref{f.tmrec} follows as in the diagonal case; compare
\cite{D00,D07,S87}. This recursion in turn implies readily that
\eqref{f.tminv} is $k$-independent.

In particular, the $x_k$'s are again generated by the trace map
$$
T(x,y,z) = (2xy - z , x , y)
$$
and the preserved quantity is again
$$
I(x,y,z) = x^2 + y^2 + z^2 - 2xyz - 1.
$$
The only difference between the diagonal and the off-diagonal model can be found in the initial
conditions. How are $x_1, x_0, x_{-1}$ obtained? Observe that the trace map is invertible and
hence we can apply its inverse twice to the already defined quantity $(x_3,x_2,x_1)$. We have
$$
T^{-1}(x,y,z) = (y,z, 2yz - x)
$$
and hence, using \eqref{f.tmic},
\begin{align*}
(x_1,x_0,x_{-1}) & = T^{-2} ( x_3 , x_2 , x_1 ) \\
& = T^{-2} \left( \frac{E^3 - 2 E a^2 - E b^2}{2a^2 b} , \frac{E^2 - a^2 - b^2}{2ab}, \frac{E}{2a}
\right) \\
& = T^{-1} \left( \frac{E^2 - a^2 - b^2}{2ab}, \frac{E}{2a} , 2 \frac{(E^2 - a^2 - b^2)E}{4a^2 b} -
\frac{E^3 - 2 E a^2 - E b^2}{2a^2 b} \right) \\
& = T^{-1} \left( \frac{E^2 - a^2 - b^2}{2ab}, \frac{E}{2a} , \frac{E}{2b} \right) \\
& = \left( \frac{E}{2a} , \frac{E}{2b} , 2 \frac{E^2}{4ab} - \frac{E^2 - a^2 - b^2}{2ab} \right) \\
& = \left( \frac{E}{2a} , \frac{E}{2b} , \frac{a^2 + b^2}{2ab} \right)
\end{align*}
It follows that
\begin{align*}
I(x_{k+1} , x_k , x_{k-1}) & = I(x_1 , x_0 , x_{-1}) \\
& = \frac{E^2}{4a^2} + \frac{E^2}{4b^2} + \frac{(a^2 + b^2)^2}{4a^2b^2} - 2 \frac{E^2(a^2 + b^2)}{8a^2b^2}
- 1 \\
& = \frac{(a^2 + b^2)^2}{4a^2b^2} - 1
\end{align*}
for every $k \ge 0$.
\end{proof}

It is of crucial importance for the spectral analysis that, as in
the diagonal case, the invariant is energy-independent and
strictly positive when $a \not= b$!

\begin{lemma}\label{l.l2}
The spectrum of $H_\omega$ is independent of $\omega$ and may be
denoted by $\Sigma_{a,b}$. With
$$
\sigma_k = \{ E \in \R : |x_k| \le 1 \},
$$
we have
\begin{equation}\label{f.descspec}
\Sigma_{a,b} = \bigcap_{k \ge 1} \sigma_k \cup \sigma_{k+1}.
\end{equation}
Moreover, for every $E \in \Sigma_{a,b}$ and $k \ge 2$,
\begin{equation}\label{t.tmbound}
|x_k| \le 1 + \left( \frac{(a^2 + b^2)^2}{4a^2b^2} - 1 \right)^{1/2}
\end{equation}
and for $E \not\in \Sigma_{a,b}$, $|x_k|$ diverges
super-exponentially.
\end{lemma}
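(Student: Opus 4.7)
The plan is to set $\Sigma' := \bigcap_{k \ge 1}(\sigma_k \cup \sigma_{k+1})$ and prove, in sequence: (iv) super-exponential divergence of $|x_k|$ for $E \notin \Sigma'$, (iii) uniform boundedness of $|x_k|$ for $E \in \Sigma'$, and finally $\sigma(H_\omega) = \Sigma'$ for every $\omega \in \Omega_{a,b}$. The last step will give both the $\omega$-independence of $\sigma(H_\omega)$ and its identification with $\Sigma_{a,b}$.

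The divergence in (iv) is the classical trace-map escape argument. Assuming $|x_{k_0}|, |x_{k_0+1}| > 1$ for some $k_0$, the recursion $x_{k+1} = 2x_k x_{k-1} - x_{k-2}$ together with positivity of the $E$-independent invariant $I$ of Lemma~\ref{l.l1} forces $|x_k|$ to grow doubly-exponentially in $k$---equivalently, $\cosh^{-1}|x_k|$ grows at least like $F_k \sim \phi^k$. This mirrors the diagonal case; cf.\ \cite{S87,D00,D07}.

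The bound (iii) is then purely algebraic. Rewriting the invariant as
$$(x_k - x_{k-1} x_{k+1})^2 = I + (1 - x_{k-1}^2)(1 - x_{k+1}^2),$$
one observes that if $E \in \Sigma'$ and $|x_k| > 1$, then the defining condition of $\Sigma'$ (at least one of each consecutive pair of traces has absolute value $\le 1$) forces $|x_{k-1}|, |x_{k+1}| \le 1$; the right-hand side is then explicitly controlled and $|x_{k-1} x_{k+1}| \le 1$, yielding the stated uniform bound on $|x_k|$.

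For $\sigma(H_\omega) = \Sigma'$: the inclusion $\sigma(H_\omega) \subseteq \Sigma'$ is a direct consequence of (iv), since super-exponential norm growth of $M_\omega(F_k, E)$ off $\Sigma'$ places $E$ in a spectral gap. For the reverse inclusion I would use the periodic approximants $\omega^{(k)} \in \Omega_{a,b}$ of period $F_k$ obtained by repeating the Fibonacci prefix $s_k$, for which Floquet--Bloch gives $\sigma(H_{\omega^{(k)}}) = \sigma_k$. If $E \in \Sigma'$, then $E \in \sigma_k$ for infinitely many $k$; the corresponding Bloch solution of $H_{\omega^{(k)}} u = E u$, restricted to the window $[0, F_k]$ on which $\omega_s$ and $\omega^{(k)}$ coincide, yields a Weyl sequence for $H_{\omega_s}$ at $E$, so $E \in \sigma(H_{\omega_s})$. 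The $\omega$-independence then follows from minimality of the Fibonacci subshift: any $\omega \in \Omega_{a,b}$ is a pointwise limit of shifts of $\omega_s$, giving strong resolvent convergence $H_{T^{n_j}\omega_s} \to H_\omega$, and combined with $\sigma(H_{T^n \omega_s}) = \sigma(H_{\omega_s})$ and upper semicontinuity of spectrum under strong resolvent convergence, this gives $\sigma(H_\omega) \subseteq \sigma(H_{\omega_s})$, with equality by symmetry. I expect the most delicate step to be the reverse inclusion $\Sigma' \subseteq \sigma(H_{\omega_s})$, since the naive spectral semicontinuity runs in the wrong direction and one must explicitly construct approximate eigenvectors from the Bloch solutions of the periodic approximants.
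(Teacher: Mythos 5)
Your outline shares the paper's skeleton (escape lemma for unbounded trace orbits, the invariant identity for the bound \eqref{t.tmbound}, minimality for $\omega$-independence), and those two components are fine. But both inclusions in $\sigma(H_\omega)=\Sigma'$ contain genuine gaps. For $\sigma(H_\omega)\subseteq\Sigma'$ you assert that super-exponential growth of $\|M_\omega(F_k,E)\|$ ``places $E$ in a spectral gap'' as a direct consequence. It does not: growth of transfer matrix norms (equivalently, positivity of the Lyapunov exponent) is perfectly compatible with $E\in\sigma(H_\omega)$ --- this is exactly the situation throughout the spectrum of the Anderson model. To make this route work one needs the much stronger statement that \emph{uniform} positivity of $\gamma(E)$ over the hull excludes $E$ from the spectrum (Johnson/Lenz), which is a theorem, not an observation. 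The paper avoids this entirely: it uses that the $F_k$-periodic approximants converge strongly to $H_{\omega_s}$, so that $\Sigma_{a,b}\subseteq\bigcap_k\overline{\bigcup_{l\ge0}\sigma_{k+l}}$, and then the escape lemma to collapse $\bigcup_{l\ge0}\sigma_{k+l}$ to $\sigma_k\cup\sigma_{k+1}$.

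For the reverse inclusion $\Sigma'\subseteq\sigma(H_{\omega_s})$, your Weyl-sequence construction from truncated Bloch solutions is incomplete for a structural reason: the Fibonacci word does not contain arbitrarily high powers of the period blocks, so $\omega_s$ agrees with the periodic approximant only on a window containing $O(1)$ periods (a square $ww$, by Damanik--Lenz). Over such a short window the boundary errors of the truncated Bloch solution need not be small relative to its $\ell^2$ norm, since within a single period the solution can a priori vary by a factor exponential in $F_k$; one would need additional input (e.g.\ Iochum--Testard-type polynomial bounds on $M_{\omega_s}(n,E)$ for $E\in\Sigma'$) to control this. The paper takes a different and cleaner route: boundedness of the traces on $\Sigma'$ is fed into the Cayley--Hamilton/square argument of Section~\ref{s.s3} to show that no solution of \eqref{f.eve} is square-summable at $+\infty$, and then $E\in\Sigma_{a,b}$ follows because otherwise $(H_\omega-E)^{-1}\delta_0$ would produce such a solution. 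Finally, a point specific to the off-diagonal model that you pass over: identifying $\sigma_k$ with the spectrum of the periodic Jacobi matrix, and passing between $U_n=(u_n,\omega_nu_{n-1})^T$ and $(u_n,u_{n-1})^T$, requires that the $\omega_n$ are uniformly bounded away from $0$ and $\infty$; the paper flags this explicitly.
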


\begin{proof}
It is well known that the hull $\Omega_{a,b}$ together with the
standard shift transformation is minimal. In particular, every
$\omega \in \Omega_{a,b}$ may be approximated pointwise by a
sequence of shifts of any other $\tilde \omega \in \Omega_{a,b}$.
The associated operators then converge strongly and we get
$\sigma(H_\omega) \subseteq \sigma(H_{\tilde \omega})$. Reversing
the roles of $\omega$ and $\tilde \omega$, the first claim follows.

So let $\Sigma_{a,b}$ denote the common spectrum of the operators
$H_\omega$, $\omega \in \Omega_{a,b}$. We have $\|H_\omega\| \le
\max \{ 2a,2b \}$. Thus, $\Sigma_{a,b} \subseteq [- \max \{ 2a,2b \}
, \max \{ 2a,2b \} ] =: I_{a,b}$. For $E \in I_{a,b}$, we have that
at least one of $x_1$, $x_0$ belongs to $[-1,1]$. This observation
allows us to use the exact same arguments S\"ut\H{o} used to prove
\eqref{f.descspec} for the diagonal model in \cite{S87}.

The only point where care needs to be taken is the claim
that $\sigma_k$ is the spectrum of the periodic Jacobi
matrix obtained by repeating the values $\omega_s$ takes on $\{ 1
\le n \le F_k \}$ periodically on the off-diagonals. This,
however, follows from the general theory of periodic Jacobi
matrices, which relies on the diagonalization of the monodromy
matrix (which is $M_{\omega_s}(F_k,E)$ in this case) in order to
obtain Floquet solutions and in particular discriminate between
those energies that permit exponentially growing solutions and
those that do not. This distinction works just as well here, but
one needs to use that the $\omega_n$'s that enter in the $U_n$'s
are uniformly bounded away from zero and infinity.

Thus, after paying attention to this fact, we may now proceed along
the lines of S\"ut\H{o}. Let us describe the main steps of the
argument. Since at least one of $x_1$, $x_0$ belongs to $[-1,1]$, we
have a result analogous to \cite[Lemma~2]{S87} with the same proof
as given there. Namely, the sequence $\{x_k\}_{k \ge 0}$ is
unbounded if and only if there exists $k$ such that $|x_{k}|> 1$ and
$|x_{k+1}| > 1$. Moreover, we then have $|x_{k + l}| > c^{F_l}$ for
some $c > 1$ and all $l \ge 0$. This shows
$$
\sigma_k \cup \sigma_{k+1} = \bigcup_{l \ge 0} \sigma_{k + l}.
$$
Using now the fact that the $F_k$ periodic Jacobi matrices with
spectrum $\sigma_k$ converge strongly to $H_{\omega_s}$, we obtain
$$
\Sigma_{a,b} \subseteq \bigcap_{k \ge 1} \overline{\bigcup_{l \ge 0}
\sigma_{k + l}} = \bigcap_{k \ge 1} \overline{\sigma_k \cup
\sigma_{k+1}} = \bigcap_{k \ge 1} \sigma_k \cup \sigma_{k+1},
$$
since the spectra $\sigma_k$ and $\sigma_{k+1}$ are closed sets.
Thus, we have one inclusion in \eqref{f.descspec}.

Next, suppose $E \in \bigcap_{k \ge 1} \sigma_k \cup \sigma_{k+1}$
If $k \ge 1$ is such that $|x_k| > 1$, then $|x_{k-1}| \le 1$ and
$|x_{k+1}| \le 1$. Since we have
$$
x_{k+1}^2 + x_k^2 + x_{k-1}^2 - 2 x_{k+1} x_k x_{k-1} -1 =
\frac{(a^2 + b^2)^2}{4a^2b^2} - 1 ,
$$
this implies
$$
x_k = x_{k+1} x_{k-1} \pm \left( 1 - x_{k+1}^2 - x_{k-1}^2 +
x_{k+1}^2 x_{k-1}^2 + \frac{(a^2 + b^2)^2}{4a^2b^2} - 1
\right)^{1/2}
$$
and hence
$$
|x_k| \le |x_{k+1} x_{k-1}| + \left( (1 - x_{k+1}^2) (1 -
x_{k-1}^2) + \left( \frac{(a^2 + b^2)^2}{4a^2b^2} - 1 \right)
\right)^{1/2}
$$
which, using $|x_{k-1}| \le 1$ and $|x_{k+1}| \le 1$ again, implies
the estimate \eqref{t.tmbound} for $E \in \bigcap_{k \ge 1} \sigma_k
\cup \sigma_{k+1}$. We will show in the next section that the
boundedness of the sequence $\{x_k\}_{k \ge 0}$ implies that, for
arbitrary $\omega \in \Omega_{a,b}$, no solution of the difference
equation \eqref{f.eve} is square-summable at $+\infty$.
Consequently, such $E$'s belong to $\Sigma_{a,b}$.\footnote{This
follows by a standard argument: If $E \not\in \Sigma_{a,b}$, then
$(H_\omega - E)^{-1}$ exists and hence $(H_\omega-E)^{-1} \delta_0$
is an $\ell^2(\Z)$ vector that solves \eqref{f.eve} away from the
origin. Choosing its values for $n \ge 1$, say, and then using
\eqref{f.eve} to extend it to all of $\Z$, we obtain a solution that
is square-summable at $+\infty$.} This shows the other inclusion in
\eqref{f.descspec} and hence establishes it. Moreover, it follows
that \eqref{t.tmbound} holds for every $E \in \Sigma_{a,b}$.

Finally, from the representation \eqref{f.descspec} of
$\Sigma_{a,b}$ and our observation above about unbounded sequences
$\{x_k\}_{k \ge 0}$, we find that $|x_k|$ diverges
super-exponentially for $E \not\in \Sigma_{a,b}$. This concludes
the proof of the lemma.
\end{proof}

\begin{lemma}\label{l.l3}
For every $E \in \R$, there is $\gamma(E) \ge 0$ such that
$$
\lim_{n \to \infty} \frac{1}{n} \log \| M_\omega(n,E) \| =
\gamma(E),
$$
uniformly in $\omega \in \Omega_{a,b}$.
\end{lemma}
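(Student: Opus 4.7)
The plan is to interpret $f_n(\omega) := \log \|M_\omega(n,E)\|$ as a continuous subadditive cocycle over the shift $\sigma$ on $\Omega_{a,b}$ and to invoke a uniform subadditive ergodic theorem. Since the Fibonacci substitution is primitive, the subshift $(\Omega_{a,b},\sigma)$ is both minimal and uniquely ergodic; denote the unique $\sigma$-invariant probability measure by $\mu$. Each transfer matrix $T_\omega(n,E)$ depends only on the single coordinate $\omega_n \in \{a,b\}$, so $\omega \mapsto f_n(\omega)$ is continuous for every $n$.

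Next, I would verify the two main structural properties. Telescoping $M_\omega(n+m,E) = M_{\sigma^n\omega}(m,E)\,M_\omega(n,E)$ together with submultiplicativity of the operator norm yields the subadditive relation
$$
f_{n+m}(\omega) \le f_n(\omega) + f_m(\sigma^n \omega).
$$
The a priori bounds $0 \le f_n(\omega) \le C(E,a,b)\,n$ follow from two observations: $\|T_\omega(n,E)\|$ is uniformly bounded because $\omega_n\in\{a,b\}$ is bounded away from $0$ and $\infty$, giving the upper bound; and $\det M_\omega(n,E)=1$ forces $\|M_\omega(n,E)\|\ge 1$, giving $f_n \ge 0$. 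Kingman's subadditive ergodic theorem then produces a $\mu$-a.e.\ limit $\frac{1}{n}f_n(\omega) \to \gamma(E) := \inf_n \frac{1}{n}\int f_n\,d\mu \ge 0$.

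The main step -- and the place where care is required -- is upgrading the a.e.\ pointwise convergence to uniform convergence on $\Omega_{a,b}$. For this one invokes Furman's theorem on continuous subadditive cocycles over uniquely ergodic systems, which delivers $\limsup_n \sup_\omega \frac{1}{n}f_n(\omega) \le \gamma(E)$ directly. The matching uniform lower bound $\liminf_n \inf_\omega \frac{1}{n}f_n(\omega) \ge \gamma(E)$ is the genuine obstacle: it is not a formal consequence of subadditivity and requires either the specific $SL(2,\R)$ structure of the Jacobi cocycle or the stronger uniform ergodic theorems available for strictly ergodic subshifts (Hof; Lenz). Given minimality of $(\Omega_{a,b},\sigma)$, every orbit is dense, so continuity of the finite-step cocycles and an approximation argument conclude uniform convergence. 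The resulting $\gamma(E)$ is then the uniform limit claimed, and nonnegativity is automatic from $f_n \ge 0$.
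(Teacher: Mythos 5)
Your proposal is correct and follows exactly the route the paper takes: the paper's proof consists of a single appeal to the uniform subadditive ergodic theorem for strictly ergodic subshifts (citing Hof, Lenz, and Damanik--Lenz), and you have simply filled in the standard verifications (continuity, the cocycle identity, subadditivity, the a priori bounds from $\det M = 1$ and boundedness of the hopping terms) together with an appropriately careful acknowledgment that the uniform lower bound is the nontrivial ingredient supplied by those references. No gaps.
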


\begin{proof}
This follows directly from the uniform subadditive ergodic theorem;
compare \cite{DL99,DL06,H93,L02}.
\end{proof}

\begin{lemma}\label{l.l4}
The set $\mathcal{Z}_{a,b} := \{ E \in \R : \gamma(E) = 0 \}$ has
zero Lebesgue measure.
\end{lemma}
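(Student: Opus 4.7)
The plan is to combine two classical inputs---Kotani's theorem for ergodic Jacobi matrices, applied in its deterministic-potential form, and the non-determinism of the Fibonacci subshift---to rule out absolutely continuous spectrum of $H_\omega$; the claim then follows since, by Kotani theory, $\mathcal{Z}_{a,b}$ is an essential support of the absolutely continuous spectral measure and hence has zero Lebesgue measure whenever the absolutely continuous spectrum is empty.

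The ergodic setup is straightforward. The subshift $(\Omega_{a,b}, T)$, where $T$ is the shift, is uniquely ergodic, and its invariant measure $\mu$ is ergodic; the off-diagonals of $H_\omega$ take only the two strictly positive values $a$ and $b$, hence are uniformly bounded away from $0$ and $\infty$. In this setting, Kotani's theorem---originally for Schr\"odinger operators but by now standard for Jacobi matrices with off-diagonals bounded above and below by positive constants---asserts that if $\mathcal{Z}_{a,b}$ has positive Lebesgue measure, then for $\mu$-almost every $\omega \in \Omega_{a,b}$ the values $(\omega_n)_{n \le 0}$ determine $(\omega_n)_{n \ge 1}$.

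To contradict this, I would exhibit two distinct elements of $\Omega_{a,b}$ that coincide on $\{n \le 0\}$. Such singular pairs are a standard feature of Sturmian subshifts; in particular, the element $\omega_s$ from Section~\ref{s.s1} admits a companion $\omega_s' \in \Omega_{a,b}$ that agrees with $\omega_s$ on $\{n \le 0\}$ but differs on $\{n \ge 1\}$. Since $\mu$ has full topological support in $\Omega_{a,b}$, this non-determinism persists on a set of positive $\mu$-measure, contradicting the Kotani conclusion; hence $\mathcal{Z}_{a,b}$ must have Lebesgue measure zero.

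The main obstacle is the clean invocation of the Jacobi version of Kotani's theorem, as the classical Schr\"odinger formulation requires adaptation to handle off-diagonals taking finitely many strictly positive values bounded away from $0$ and $\infty$; this adaptation is standard but should be stated with care. The unique ergodicity of the Fibonacci hull and the existence of singular pairs, by contrast, are soft consequences of its Sturmian structure and require no substantial additional work.
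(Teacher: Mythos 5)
Your proposal is correct and takes essentially the same route as the paper, whose proof consists of citing Kotani's theorem for ergodic Jacobi matrices with finitely-valued coefficients (\cite{K89}, \cite{CL90}, \cite{R07}); you are simply unpacking the content of that citation, namely that positive Lebesgue measure of $\{\gamma = 0\}$ would force determinism, which the aperiodic Fibonacci subshift violates. The one step to state with slightly more care is the upgrade from a single singular pair to the failure of $\mu$-almost-everywhere determinism, but this is the standard continuity argument for processes taking finitely many values and is part of what \cite{K89} supplies.
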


\begin{proof}
This is one of the central results of Kotani theory; see
\cite{K89} and also \cite{D07b}. Note that these papers only
discuss the diagonal model. Kotani theory for Jacobi matrices is
discussed in Carmona-Lacroix \cite{CL90} and the result needed can
be deduced from what is presented there. For a recent reference
that states a result sufficient for our purpose explicitly, see
Remling \cite{R07}.
\end{proof}

\begin{lemma}\label{l.l5}
We have $\Sigma_{a,b} = \mathcal{Z}_{a,b}$.
\end{lemma}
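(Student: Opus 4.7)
The plan is to prove the two inclusions $\mathcal{Z}_{a,b} \subseteq \Sigma_{a,b}$ and $\Sigma_{a,b} \subseteq \mathcal{Z}_{a,b}$ separately, using the trace control from Lemma~\ref{l.l2} and the uniform existence of the Lyapunov exponent from Lemma~\ref{l.l3}. For the first inclusion I argue by contraposition: if $E \notin \Sigma_{a,b}$, then from the proof of Lemma~\ref{l.l2} there exists $k_0$ with $|x_{k_0}|, |x_{k_0+1}| > 1$, and hence $|x_{k_0+\ell}| \ge c^{F_\ell}$ for some $c > 1$ and all $\ell \ge 0$. Since $|x_k| = \frac{1}{2}|\mathrm{tr}\,M_{\omega_s}(F_k,E)| \le \|M_{\omega_s}(F_k,E)\|$ for $2\times 2$ matrices, this forces $\frac{1}{F_{k_0+\ell}} \log \|M_{\omega_s}(F_{k_0+\ell},E)\| \ge \frac{F_\ell}{F_{k_0+\ell}} \log c$, whose liminf as $\ell \to \infty$ equals $\varphi^{-k_0}\log c > 0$, where $\varphi = (1+\sqrt 5)/2$. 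Lemma~\ref{l.l3} then yields $\gamma(E) > 0$, hence $E \notin \mathcal{Z}_{a,b}$.

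For the reverse inclusion, suppose $E \in \Sigma_{a,b}$. Lemma~\ref{l.l2} gives a uniform bound $|x_k| \le C$. I would use this to deduce subexponential growth of the transfer matrices along the Fibonacci subsequence, namely $\frac{1}{F_k}\log\|M_{\omega_s}(F_k,E)\| \to 0$, after which $\gamma(E) = 0$ follows from Lemma~\ref{l.l3}. The tool is the matrix recursion $M_{k+1} = M_{k-1}M_k$, with $M_k := M_{\omega_s}(F_k,E)$, which follows from the substitutive identity $s_{k+1} = s_k s_{k-1}$ together with the fact that the transfer matrix at site $n$ depends only on the letter $\omega_n \in \{a,b\}$ and not on the position. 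Combining this with Cayley--Hamilton $M_k^2 = 2x_k M_k - I$ (valid since $\det M_k = 1$), the boundedness of the $x_k$'s propagates to a polynomial-in-$F_k$ bound on $\|M_k\|$, by essentially the same argument S\"ut\H{o} used in the diagonal setting \cite{S87}.

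The hard part is this second step, i.e., passing from bounded traces to polynomial norm control. In the off-diagonal case the adaptation should go through with only cosmetic changes, because the ingredients used are: (a) the recursion $M_{k+1} = M_{k-1}M_k$, which depends only on the substitutive structure of $s_k$, not on whether $a,b$ appear on the diagonal or off-diagonal; (b) uniform boundedness of the $x_k$'s, which relies solely on positivity of the invariant from Lemma~\ref{l.l1}; and (c) uniform bounds away from $0$ and $\infty$ on the prefactors $1/\omega_n$ in $T_\omega(n,E)$, which hold since $\omega_n \in \{a,b\}$. With these in place the polynomial norm estimate carries over without essential modification, completing the proof.
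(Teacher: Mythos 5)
Your proof is correct, and for the inclusion $\Sigma_{a,b} \subseteq \mathcal{Z}_{a,b}$ it follows the same route as the paper's first suggested argument: bounded traces on the spectrum imply polynomial bounds on $\|M_{\omega_s}(F_k,E)\|$ via the recursion $M_{k+1}=M_{k-1}M_k$ and Cayley--Hamilton, exactly the Iochum--Testard/S\"ut\H{o} scheme that the paper cites (the paper leaves the details to \cite{IT91,DL99} just as you do, and it also records an alternative via Oseledec's theorem and the non-existence of decaying solutions, which you do not use). Where you genuinely diverge is the inclusion $\mathcal{Z}_{a,b} \subseteq \Sigma_{a,b}$: the paper disposes of it ``by general principles,'' constructing Weyl sequences by truncation when $\gamma(E)=0$, whereas you argue the contrapositive quantitatively, using the super-exponential divergence $|x_{k_0+\ell}|\ge c^{F_\ell}$ from Lemma~\ref{l.l2} together with $|x_k|\le\|M_{\omega_s}(F_k,E)\|$ and the existence of the full limit in Lemma~\ref{l.l3} to force $\gamma(E)\ge \varphi^{-k_0}\log c>0$ off the spectrum. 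Your version is more self-contained given the earlier lemmas and even yields an explicit positive lower bound on the Lyapunov exponent off the spectrum, at the cost of leaning on the trace-map analysis; the paper's soft argument is shorter and independent of Lemma~\ref{l.l2}. Both are valid, and your implicit use of Lemma~\ref{l.l3} to upgrade subsequential information (along $n=F_k$, for the single element $\omega_s$) to a statement about $\gamma(E)$ is legitimate precisely because that lemma guarantees the limit exists uniformly in $\omega$ and over all $n$.
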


\begin{proof}
The inclusion $\Sigma_{a,b} \supseteq \mathcal{Z}_{a,b}$ holds by
general principles. For example, one can construct Weyl sequences by
truncation when $\gamma(E) = 0$. The inclusion $\Sigma_{a,b}
\subseteq \mathcal{Z}_{a,b}$ can be proved in two ways. Either one
uses the boundedness of $x_k$ for energies $E \in \Sigma_{a,b}$ to
prove explicit polynomial upper bounds for $\|M_\omega(n,E)\|$ (as
in \cite{IT91} for $\omega = \omega_s$ or in \cite{DL99} for general
$\omega \in \Omega_{a,b}$), or one combines the proof of the absence
of decaying solutions at $+\infty$ for $E \in \Sigma_{a,b}$ given in
the next section with Osceledec's Theorem, which states that
$\gamma(E) > 0$ would imply the existence of an exponentially
decaying solution at $+\infty$. Here we use one more time that $U_n$
is comparable in norm to $(u_n,u_{n-1})^T$.
\end{proof}

\begin{proof}[Proof of Theorem~\ref{t.main1}.]
The existence of the uniform spectrum $\Sigma_{a,b}$ was shown in
Lemma~\ref{l.l3} and the fact that $\Sigma_{a,b}$ has zero
Lebesgue measure follows from Lemmas~\ref{l.l4} and \ref{l.l5}.
The set of bounded orbits of the restriction of the trace map
$T:\mathbb{R}^3\to \mathbb{R}^3$ to the invariant surface
$I(x,y,z)=C\equiv\frac{(a^2 + b^2)^2}{4a^2b^2} - 1, C>0,$ is
hyperbolic; see \cite{Can} (and also \cite{DG} for $C$
sufficiently small and \cite{Cas} for $C$ sufficiently large). Due
to Lemma~\ref{l.l2}, the points of the spectrum correspond to the
points of the intersection of the line of the initial conditions
$$
l_{a,b}\equiv\left\{ \left( \frac{E}{2a} , \frac{E}{2b} , \frac{a^2
+ b^2}{2ab} \right) : E \in \mathbb{R}\right\}
$$
with the stable manifolds of the hyperbolic set of bounded orbits.
Properties~(ii) and (iii) can be proved in exactly the same way as
Theorem~6.5 in \cite{Can}. The line $l_{a,b}$ intersects the
stable lamination of the hyperbolic set transversally for
sufficiently small $C > 0$, as can be shown in the same way as for
the diagonal Fibonacci Hamiltonian with a small coupling constant;
see \cite{DG}. Therefore the spectrum $\Sigma_{a,b}$ for close
enough $a$ and $b$ is a dynamically defined Cantor set, and the
properties (iv)--(vi) follow; see \cite{DEGT, DG, Ma, MM, P, PT}
and references therein.
\end{proof}

Notice that a proof of the transversality of the line $l_{a,b}$ to
the stable lamination of the hyperbolic set of bounded orbits for
arbitrary $a\ne b$ would imply the properties (iv)--(vi) for these
values of $a$ and $b$.

\section{Singular Continuous Spectrum}\label{s.s3}

In this section we prove Theorem~\ref{t.main2}. Given the results from the previous section, we can
follow the proofs from the diagonal case quite closely.

\begin{proof}[Proof of Theorem~\ref{t.main2}.]
Since the absence of absolutely continuous spectrum follows from
zero measure spectrum, we only need to show the absence of point
spectrum. It was shown by Damanik and Lenz \cite{DL} that, given any
$\omega \in \Omega_{a,b}$ and $k \ge 1$, the restriction of $\omega$
to $\{ n \ge 1 \}$ begins with a square
$$
\omega_1 \ldots \omega_{2F_k} \ldots = \omega_1 \ldots \omega_{F_k} \omega_1 \ldots \omega_{F_k} \ldots
$$
such that $\omega_1 \ldots \omega_{F_k}$ is a cyclic permutation
of $S^k(a)$. By cyclic invariance of the trace, it follows that
$\mathrm{Tr} M_\omega(F_k,E) = 2 x_k(E)$ for every $E$.

The Cayley-Hamilton Theorem, applied to $M_\omega(F_k,E)$, says
that
$$
M_\omega(F_k,E)^2 - \left( \mathrm{Tr} M_\omega(F_k,E) \right)
M_\omega(F_k,E) + I = 0,
$$
which, by the observations above, translates to
$$
M_\omega(2F_k,E) - 2x_k M_\omega(F_k,E) + I = 0.
$$

If $E \in \Sigma_{a,b}$ and $u$ is a solution of the difference
equation \eqref{f.eve}, it therefore follows that
$$
U(2F_k + 1) - 2x_k U(F_k + 1) + U(1) = 0.
$$
If $u$ does not vanish identically, this shows that $u_n \not\to
0$ as $n \to \infty$ since the $x_k$'s are bounded above and the
$\omega_n$'s are bounded below away from zero. In particular, if
$E \in \Sigma_{a,b}$, then no non-trivial solution of
\eqref{f.eve} is square-summable at $+\infty$ and hence $E$ is not
an eigenvalue. It follows that the point spectrum of $H_\omega$ is
empty.
\end{proof}

\end{document}